\newtheorem{theorem}{Theorem}[section]
\newtheorem{corollary}{Corollary}[theorem]
\begin{document}

\title{On Information (pseudo) Metric \thanks{Supported by Median Technologies.}}

\author{Pierre Baudot\\ Median Technologies, Les Deux Arcs, 1800 Route des Crêtes Bâtiment B, 06560 Valbonne, France\\ pierre.baudot@gmail.com}

\maketitle              

\begin{abstract}
This short note revisit information metric, underlining that it is a pseudo metric on manifolds of observables (random variables), rather than as usual on probability laws. Geodesics are characterized in terms of their boundaries and conditional independence condition. Pythagorean theorem is given, providing in special case potentially interesting natural integer triplets. This metric is computed for illustration on Diabetes dataset using infotopo package.     

\end{abstract}
\section{Introduction}
\begin{sloppypar}
While Fisher and Wasserstein metric have been the subject of a lot of studies along the development of information geometry, information metric, although directly applicable to discrete systems and machine learning, received few attention.	
The information metric, $V(X,Y)=H(X,Y)-I(X;Y)$ (the difference between Joint entropy and mutual information), was discovered by Shannon \cite{Shannon1953}, rediscovered several times, and developed in a normalized form by Rajski \cite{Rajski1961}. Rajski defined the normalized metric:  $d(X,Y)=1- \frac{I(X;Y)}{H(X;Y)}$ \cite{Rajski1961}. It is the central function in the work of Zurek on the thermodynamic cost of computation \cite{Zurek1989}, further developed notably in the context of Kolmogorov complexity \cite{Bennett1998}, and has further been applied for hierarchical clustering and finding category in data by Kraskov and Grassberger \cite{Kraskov2009}. Te sun Han could show that this metric is indeed unique, unraveling that non-negativity  of information imposes triangle inequality \cite{Han1981}. 
Considering the theorem of Hu Kuo Ting establishing the correspondence of informations functions with set theoretical union, intersection and complement on additive functions \cite{Hu1962,Baudot2019}, it becomes obvious that the information metric is an informational and geometrical exact expression of the classical "score or loss" functions used in machine learning such as the "intersection over union", the Dice Index or the Jaccard distance \cite{Jaccard1901} (only the latter is a metric).\\  	
In probability and thermodynamic, it is very common to consider a probability law (a state) as a point on a manifold, the coordinate of which give the extensive variables such as volume, entropy and energy \cite{Callen1960}. Considering information metric impose the introduction a new variant of it, by considering \textbf{manifold of observables or of random variables} (piecewise linear manifold here), that we find very appealing, intuitive and coherent with the introduction of random variable complexes \cite{Abramsky2011,Baudot2015a,Vigneaux2019}. As in the binary random variable case, information functions provides coordinates in the probability simplex and characterize the probability law (up to finite ambiguity, see theorem 3c \cite{Baudot2019}), for the binary case it does not bring much thing new (roughly, just a kind of non-linear coordinate transformation). However, for n-ary variables, with $n>2$, this simplifies probabilistic systems importantly, a simplification which is justified in all cases where the variables are given apriori (a case that covers all data applications or empirical measures). 
Previous works established that Gibbs-Shannon entropy function $H_k$ can be characterized (uniquely up to the multiplicative constant of the logarithm basis) as the first class of cohomology defined on random variables complexes (realized as the poset of partitions of atomic probabilities), endowed with a Hochschild coboundary operator (with a left action of conditioning). Marginalization correspond to localization and allows to construct Topos of information \cite{Baudot2015a,Vigneaux2019} (see also the related results found independently by Baez, Fritz and Leinster \cite{Baez2014,Baez2011}). Surprisingly, this metric appears as a cocycle in the special case considering a symmetric action of conditioning à la Gerstenhaber and Shack \cite{Gerstenhaber1987,Baudot2019a}. Vigneaux could notably underline the correspondence of the formalism with the theory of contextuality developed by Abramsky, that also considers complex of variables \cite{Abramsky2011,Vigneaux2019}. As a result complexes of random variables appear as a key object in those studies, and the study of the information metric presented here shall be considered in the special case of simplicial complex of random variables which geometrical realization are piecewise linear manifolds of observables. Linear and convex combinations of random variables are studied in the context of information homotopy to be submitted \cite{Baudot2021}. Moreover without proof, we expect that the topology induced by this metric to be the poset topology also called Alexandrov topology corresponding to partition poset and as suggested by the work of Bennequin et al. \cite{Bennequin2020}.    	
\end{sloppypar} 

\section{Information pseudo metric} 

\subsection{Functions definition} 
\paragraph{Entropy.} the joint-entropy is defined  by \cite{Shannon1948} for any joint-product of $k$ random variables  $(X_1,..,X_k)$ with $\forall i \in [1,..,k], X_i\leq \Omega$ and for a probability joint-distribution  $\mathbb{P}_{(X_1,..,X_k)}$:
\begin{equation} \label{jointentropy multiple}
H_k = H(X_{1},..,X_{k};P) = k\sum_{x_1 \in [N_1],..,x_k \in  [N_k]}^{N_1\times..\times N_k}p(x_1,..,x_k)\ln p(x_1,..,x_k)\\
\end{equation}
where $[N_1\times...\times N_k]$ denotes the "alphabet" of $(X_1,...,X_k)$. More precisely, $H_k$ depends on 4 arguments: first, the sample space: a finite set $N_{\Omega}$; second a probability law $P$ on $N_{\Omega}$;  third, a set of random variable on $N_{\Omega}$, which is a surjective map $X_{j}:N_{\Omega} \rightarrow N_j$ and provides a partition of $N_{\Omega}$, indexed by the elements $x_{j_i}$ of $N_j$.  $X_j$ is less fine than $\Omega$, and write $X_{j}\leq \Omega$, or $\Omega \rightarrow X_{j}$, and the joint-variable $(X_i, X_j)$ is the less fine partition, which is finer than $ X_i $ and $ X_j $; fourth, the arbitrary constant $k$. Adopting this more exhaustive notation, the entropy of $X_{j}$ for $P$ at $\Omega$ becomes $H_{\Omega}(X_{j};P)=H(X_{j};P_{X_j})=H(X_{j*}(P))$, where $X_{j*}(P)$ is the \emph{marginal} of $P$ by $X_{j}$ in $\Omega$.

\paragraph{Multivariate Mutual informations.} The k-mutual-information (also called co-information) are defined by \cite{McGill1954,Hu1962}:
\begin{equation}\label{n-mutual information}
I_k=I(X_{1};...;X_{k};P) = k\sum_{x_1,...,x_k\in [N_1\times...\times N_k]}^{N_1\times...\times N_k}p(x_1.....x_k)\ln \frac{\prod_{I\subset [k];card(I)=i;i \ \text{odd}} p_I}{\prod_{I\subset [k];card(I)=i;i \ \text{even}} p_I} 
\end{equation}
For example, $I_2=k\sum p(x_1,x_2)\ln \frac{p(x_1)p(x_2)}{p(x_1,x_2)}$ and  the 3-mutual information is the function $I_3=k\sum p(x_1,x_2,x_3)\ln\frac{p(x_1)p(x_2)p(x_3)p(x_1,x_2,x_3)}{p(x_1,x_2)p(x_1,x_3)p(x_2,x_3)}$. 
We have the alternated sums or inclusion-exclusion rules \cite{Hu1962,Matsuda2001,Baudot2015a}:
\begin{equation}\label{Alternated sums of information}
I_n=I(X_1;...;X_n;P)=\sum_{i=1}^{n}(-1)^{i-1}\sum_{I\subset [n];card(I)=i}H_i(X_I;P)
\end{equation}
And the dual inclusion-exclusion relation (\cite{Baudot2019a}): 
\begin{equation}\label{Alternated sums of entropy}
H_n=H(X_1,...,X_n;P)=\sum_{i=1}^{n}(-1)^{i-1}\sum_{I\subset [n];card(I)=i}I_i(X_I;P)
\end{equation}

\paragraph{Conditional Mutual informations.} The conditional mutual information of two variables $X_{1};X_{2}$ knowing $X_3$ is noted $X_3.I(X_{1};X_{2})$ and defined as \cite{Shannon1948}: 
\begin{equation}\label{conditional mutual information}
I(X_{1};X_{2}|X_3;P)=k\sum_{x_1,x_2,x_3\in [N_1\times N_2\times N_3]}^{N_1\times N_2\times N_3} p(x_1,x_2,x_3)\ln \frac{p(x_1,x_3)p(x_2,x_3)}{p(x_3)p(x_1,x_2,x_3)}
\end{equation} 

$H_k$ and $I_k$ allows to obtain  information distance or metric defined by: 
\begin{equation}\label{Information distances}
\begin{split}
V_2 & =V(X,Y;P)= H(X;Y;\mathbb{P})-I(X;Y;\mathbb{P})= H(X|Y;\mathbb{P})+H(Y|X;\mathbb{P}) \\
& = k\sum_{x_1,x_2\in\mathscr{X}}^{N_1*N_2}p(x_1.x_2)\ln \frac{(p(x_1.x_2))^2}{p(x_1)p(x_2)} 
= 2k\mathbb{E}_{X_1.X_2} \ln  \frac{(p(x_1.x_2))}{\sqrt{p(x_1)p(x_2)}} \\
& = D(P_{X_1\times X_2}||P_{X_1}) +  D(P_{X_1\times X_2}||P_{X_2}) 
\end{split}
\end{equation}
The last expression underlines its direct expression as a Jensen-Shannon Divergence. Just as for entropy the multiplicative constant $k$ is arbitrary, the usual convention as $k=-1/\ln 2$ is used here to provide the "Bit" as unit, but  one may see it geometrically as a conformal factor fixing information gauge \cite{Cartan1946}, or other projective metric. Information (pseudo-)metric generalizes to the multivariate case to k-volumes  \cite{Baudot2019a}:
\begin{equation}\label{Information volumes}
V_k = V(X_1,...,X_k;P) = H(X_1;...;X_k;\mathbb{P})-I(X_1,...,X_k;\mathbb{P})\\
\end{equation}
$V_k$ are non-negative and symmetric functions: like $H_k$ and $I_k$ they are invariant to the permutation of the variables, but they have no cohomological interpretation.  \\
Hu Kuo Ting \cite{Hu1962} characterized Markov chains in terms of pairwise mutual information :\\
\begin{theorem} \textbf{(information characterization of Markov chains, Hu Kuo Ting):} \label{information characterization Markov}	
	The variables $X_1,...,X_n$ can be arranged in a Markov process $(X_{i_1},...,X_{i_n})$ if and only if, for every subset $J=\{j_1,...,j_{k-2}\}$ of $\{i_2,...,i_{n-1}\}$ of cardinality $k-2$, we have $I_k(X_{i_1};X_{j_1},...;X_{j_{k-2}};X_{i_n})=I_2(X_{i_1};X_{i_n}).$
\end{theorem}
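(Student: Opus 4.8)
The plan is to pass through the set-theoretic correspondence of Hu Kuo Ting already invoked above: associate to $X_1,\dots,X_n$ abstract sets $\widetilde X_1,\dots,\widetilde X_n$ and the signed $I$-measure $\mu^*$ on the field they generate, so that $H(X_S)=\mu^*\!\big(\bigcup_{i\in S}\widetilde X_i\big)$, the co-information $I_k(X_{a_1};\dots;X_{a_k})=\mu^*\!\big(\bigcap_{\ell}\widetilde X_{a_\ell}\big)$, and the conditional mutual information $I(X_a;X_b\mid X_S)=\mu^*\!\big(\widetilde X_a\cap\widetilde X_b\setminus\bigcup_{s\in S}\widetilde X_s\big)$. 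The preliminary remark I would isolate is that $(X_{i_1},\dots,X_{i_n})$ is a Markov chain if and only if $\mu^*$ annihilates every atom $\bigcap_{j\in W}\widetilde X_{i_j}\cap\bigcap_{j\notin W}\widetilde X_{i_j}^{\,c}$ whose support $W\subseteq\{1,\dots,n\}$ is not an interval of consecutive integers: collecting the atoms with a gap at position $b$ gives $\mu^*\!\big(\bigcup_{a<b}\widetilde X_{i_a}\cap\bigcup_{c>b}\widetilde X_{i_c}\setminus\widetilde X_{i_b}\big)=I\big((X_{i_1},\dots,X_{i_{b-1}});(X_{i_{b+1}},\dots,X_{i_n})\mid X_{i_b}\big)$, and vanishing of these for every $b$ is exactly the global Markov property.

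Granting this, the forward implication is immediate. If $(X_{i_1},\dots,X_{i_n})$ is Markov and $J\subseteq\{i_2,\dots,i_{n-1}\}$, then $I_{|J|+2}(X_{i_1};X_J;X_{i_n})=\mu^*\!\big(\widetilde X_{i_1}\cap\widetilde X_{i_n}\cap\bigcap_{j\in J}\widetilde X_j\big)$ is a sum of measures of atoms whose support contains positions $1$ and $n$; by the remark only interval supports contribute, the only interval containing $1$ and $n$ is $\{1,\dots,n\}$, so the sum collapses to $\mu^*(\widetilde X_{i_1}\cap\dots\cap\widetilde X_{i_n})=I_n(X_{i_1};\dots;X_{i_n})$. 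The case $J=\varnothing$ gives $I_2(X_{i_1};X_{i_n})$ equal to the same quantity, hence all these coincide.

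For the converse I would first distill conditional-independence content out of the identities. On the subfield generated by $\widetilde X_{i_1}\cap\widetilde X_{i_n}$ and the intermediate sets, put $g(J)=\mu^*\!\big(\widetilde X_{i_1}\cap\widetilde X_{i_n}\cap\bigcap_{j\in J}\widetilde X_j\big)$ and $h(S)=\mu^*\!\big(\widetilde X_{i_1}\cap\widetilde X_{i_n}\setminus\bigcup_{s\in S}\widetilde X_s\big)=I(X_{i_1};X_{i_n}\mid X_S)$; expanding $\{A:A\not\supseteq S\}$ by inclusion-exclusion yields $\sum_{\varnothing\ne T\subseteq S}(-1)^{|T|+1}h(T)=g(\varnothing)-g(S)$, so the hypothesis $g(S)=g(\varnothing)$ together with an induction on $|S|$ forces $h(S)=0$ for every nonempty $S$. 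Reading the same family of identities with an arbitrary pair of the variables in the role of the endpoints gives $I(X_{i_p};X_{i_q}\mid X_S)=0$ for all $p<q$ and all index sets $S$ strictly between them. One then promotes this to the full Markov property by induction on $n$, deleting an endpoint, recursing on the shorter reordering, and re-assembling through the chain rule together with the contraction and weak-union axioms for conditional independence --- equivalently, by verifying that once all these conditional mutual informations vanish no non-interval atom of the full field can carry mass.

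The main obstacle is this last step. The forward direction and the M\"obius-inversion reduction are essentially bookkeeping, but turning ``the endpoints are conditionally independent given every subset of the intermediate variables'' into ``the reordering is a Markov chain'' is the substantive point, and it is why one must exploit the identities for large $J$ and across all pairs of extremes, not merely the pairwise ones. A further subtlety is that $\mu^*$ is only a signed measure, so a non-interval atom contained in a $\mu^*$-null set need not itself be null: its vanishing has to be produced by an explicit alternating-sum cancellation, and organizing these cancellations simultaneously for all atoms --- equivalently, arranging the induction on $n$ so that the shorter reordering genuinely inherits enough of the hypothesis --- is where I expect the real work to lie.
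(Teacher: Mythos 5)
First, a point of reference: the paper does not prove this theorem at all --- it is quoted from Hu Kuo Ting with a citation --- so your attempt can only be judged against the statement itself. Your forward direction is sound in outline but rests on the unproved half of your ``preliminary remark,'' namely that Markovianity forces $\mu^*$ to vanish on every \emph{individual} non-interval atom. Your justification only shows that the sums of $\mu^*$ over the atoms with a gap at $b$ vanish, and, as you yourself observe later, a signed measure can vanish on a union of atoms without vanishing on each of them; the atom-level claim is the genuinely hard Markov structure theorem and cannot be waved through in a clause. Fortunately you do not need it: your own M\"obius identity $g(\varnothing)-g(S)=\sum_{\varnothing\neq T\subseteq S}(-1)^{|T|+1}I(X_{i_1};X_{i_n}\mid X_T)$, combined with the elementary fact that in a Markov chain any nonempty set of intermediate variables separates the endpoints (so every $I(X_{i_1};X_{i_n}\mid X_T)=0$), already gives $g(S)=g(\varnothing)$ for all $S$.

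The converse contains the real gap. After correctly extracting $I(X_{i_1};X_{i_n}\mid X_S)=0$ for all nonempty $S$, you invoke ``the same family of identities with an arbitrary pair of the variables in the role of the endpoints'' --- but the hypothesis supplies identities only for the single extreme pair $(i_1,i_n)$; nothing in the statement lets $X_{i_p}$ and $X_{i_q}$ with $1<p<q<n$ play that role. This is not a repairable bookkeeping slip: conditional independence of the two extremes given subsets of the intermediates says nothing about independences among the intermediates themselves, so the stated hypothesis genuinely does not imply Markovianity. A degenerate example makes this explicit: take $n=4$ with $X_1,X_2$ i.i.d.\ fair bits, $X_3=X_1\oplus X_2$ and $X_4$ constant; then every $I_k(X_{i_1};X_J;X_{i_n})$ and $I_2(X_{i_1};X_{i_n})$ equals zero, so the hypothesis holds vacuously, yet no arrangement of these four variables is a Markov chain. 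So either the theorem as transcribed is missing hypotheses --- most plausibly the identities for \emph{every} pair $(i_p,i_q)$ with $J$ ranging over subsets of $\{i_{p+1},\dots,i_{q-1}\}$, which is exactly what your argument consumes --- or you must state explicitly that you are proving a corrected version. Granting that stronger hypothesis, your plan (M\"obius inversion for each pair, then assembly via chain rule, contraction and weak union) does close, but the assembly step is where the proof actually lives and needs to be written out rather than described.
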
 

\begin{figure} [!h]  	
	\centering
	\includegraphics[height=2.5cm]{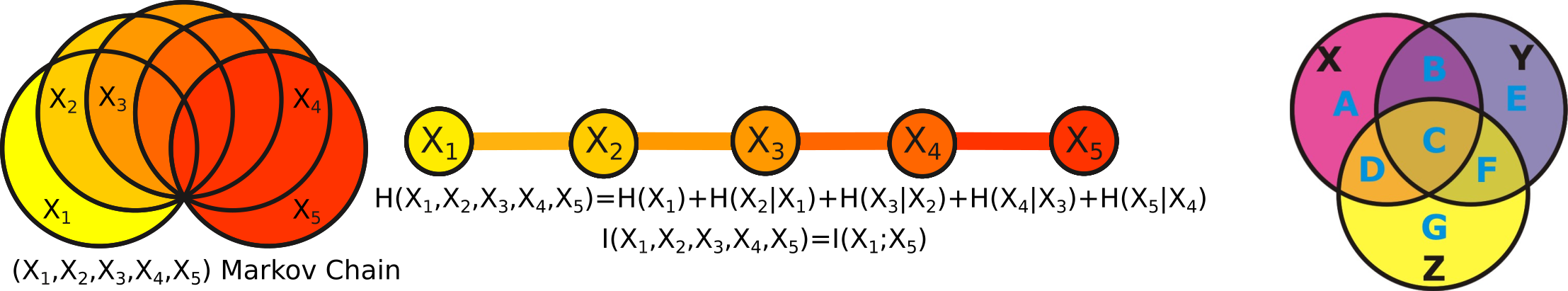}
	\caption{(Left) Markov chains: corresponding Venn Diagram and undirected graph.  (Right) Venn diagram corresponding to the information decomposition for the proof of Information triangle inequality}
	\label{figure Markov}
\end{figure}

\noindent As a consequence, all the functions $I_k(X_I)$ involving $i_1$ and $i_n$  are positive for a Markov process between $(X_{i_1},...,X_{i_n})$. Equivalently, we have $X_1\rightarrow...\rightarrow X_n$  forms a Markov chain if and only if $\forall I \subseteq [n]/{1,n}$ we have $X_I.I(X_1;X_n)=0$. As a special case we have $X_1 \rightarrow X_2 \rightarrow X_3$ forms a Markov chain if and only if  $X_2.I(X_1,X_3)=0$ (cf. Figure \ref{figure Markov} left)

\subsection{Information Pseudo Metric} 

\begin{sloppypar}
Information metric is a pseudometric rather than a metric, since we can find cases for which $H(X;Y;\mathbb{P})-I(Y;X;\mathbb{P})=0$ but clearly $X\neq Y$, indeed all the points (probability laws) satisfying the equation: 
$\prod_{x,y}p(x,y)^{2p(x,y)}=\prod_{x,y}p(x)^{p(x)}p(y)^{p(y)}$. For example, considering two binary random variables, the preceding equation becomes:  
\begin{equation}
\scriptstyle P_{00}^{2P_{00}}P_{01}^{2P_{01}}P_{10}^{2P_{10}}P_{11}^{2P_{11}}=(P_{00}+P_{01})^{P_{00}+P_{01}}(P_{00}+P_{10})^{P_{00}+P_{10}}(P_{11}+P_{01})^{P_{11}+P_{01}}(P_{11}+P_{10})^{P_{11}+P_{10}}
\end{equation}

\noindent Let's note $\{P_{00},P_{01},P_{10},P_{11}\}$ the probability coordinates, then $\{1/2,0,0,1/2\}$, $\{0,1/2,1/2,0\}$, $\{1,0,0,0\}$, $\{0,1,0,0\}$, $\{0,0,1,0\}$ and $\{0,0,0,1\}$ are solutions of $V(X,Y)=0$, e.g. all maxima of $I_2$ and the fully deterministic cases. For $\{0,1,0,0\}$, $\{0,0,1,0\}$, the marginal probability laws are not equal, just equivalent under the permutation of the atoms (e.g. for $\{0,1,0,0\}$, we have for the first variable $P(X_1=0)=1,P(X_1=1)=0$ while for the second variable we have $P(X_2=0)=0,P(X_2=1)=1$). This probably generalizes to arbitrary discrete probability law. 
If we identify the sets of probability laws with $0$ pseudometric (characterized below) into a single equivalence class and quotient the information structure by this equivalence class, then the resulting quotient information structure can be properly metrized by the induced metric. The sets with 0 pseudometric are the elementary events of the marginal random variable, the equivalence class can be identified with the barycentric center of those atoms on the probability simplex, the maximum entropy point of $(X;\mathbb{P})$.
\end{sloppypar}
\begin{theorem}[information pseudo metric]
	$V_2$ is a pseudo metric: It fulfills the 3 axioms of pseudometric, namely:
	\begin{itemize}
		\item \textbf{symmetry:} $V(X,Y;\mathbb{P})=V(Y,X;\mathbb{P})$
		\item \textbf{For metric: identity of indiscernible :} $(V(X,Y;\mathbb{P})=0)\Leftrightarrow X=Y$.\\
		\textbf{For pseudometric: equivalence of indiscernible}  $V(X,X;\mathbb{P})=0$ (a weakening of the previous).
		\item \textbf{triangle inequality:} $V(X,Z;\mathbb{P})\leq V(X,Y;\mathbb{P})+V(Y,Z;\mathbb{P})$ 
	\end{itemize} 
\end{theorem}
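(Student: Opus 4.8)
The plan is to reduce each of the three bullet points to elementary, well-known properties of Shannon entropy: the chain rule, monotonicity of (conditional) entropy under adjunction of variables, and ``conditioning reduces entropy'' — equivalently, the non-negativity of conditional entropies and of two-variable conditional mutual informations. Symmetry is immediate, since $H_2$ and $I_2$ are invariant under permutation of their arguments (as already recorded for all the $V_k$), hence $V(X,Y;\mathbb{P})=V(Y,X;\mathbb{P})$. For the diagonal I would use the identity $V(X,Y;\mathbb{P})=H(X|Y;\mathbb{P})+H(Y|X;\mathbb{P})$ from \eqref{Information distances}: since a variable is a deterministic function of itself, $H(X|X;\mathbb{P})=0$, so $V(X,X;\mathbb{P})=0$, which is the pseudometric axiom. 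For the stronger metric axiom, $V(X,Y;\mathbb{P})=0$ forces (both summands being non-negative) $H(X|Y;\mathbb{P})=H(Y|X;\mathbb{P})=0$; using $H(X|Y;\mathbb{P})=0\iff X\le Y$ on $\mathrm{supp}(\mathbb{P})$ (every block of the partition $Y$ sits inside a block of $X$), this gives $X\le Y$ and $Y\le X$, i.e. $X$ and $Y$ induce the same partition of $\mathrm{supp}(\mathbb{P})$ — equal up to the relabelling-of-atoms equivalence discussed just above the statement. In the quotient information structure this becomes genuine equality $X=Y$, yielding a true metric; without quotienting one keeps only $V(X,X;\mathbb{P})=0$.

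For the triangle inequality — the substantive point — I would first reduce it to the one-sided bound
\[
H(X|Z;\mathbb{P})\;\le\;H(X|Y;\mathbb{P})+H(Y|Z;\mathbb{P}),
\]
because applying it with $X$ and $Z$ exchanged gives $H(Z|X;\mathbb{P})\le H(Z|Y;\mathbb{P})+H(Y|X;\mathbb{P})$, and adding the two and regrouping through \eqref{Information distances} yields exactly $V(X,Z;\mathbb{P})\le V(X,Y;\mathbb{P})+V(Y,Z;\mathbb{P})$. To prove the one-sided bound I would read off the $(X,Y,Z)$ information (Venn) diagram of Figure~\ref{figure Markov} (right): splitting the region ``$X$ minus $Z$'' into ``$X\cap Y$ minus $Z$'' and ``$X$ minus $Y$ minus $Z$'' gives $H(X|Z;\mathbb{P})=I(X;Y|Z;\mathbb{P})+H(X|Y,Z;\mathbb{P})$; then $I(X;Y|Z;\mathbb{P})\le H(Y|Z;\mathbb{P})$ since the difference is $H(Y|X,Z;\mathbb{P})\ge 0$, and $H(X|Y,Z;\mathbb{P})\le H(X|Y;\mathbb{P})$ since the difference is $I(X;Z|Y;\mathbb{P})\ge 0$. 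Equivalently and more compactly, $H(X|Z)\le H(X,Y|Z)=H(Y|Z)+H(X|Y,Z)\le H(Y|Z)+H(X|Y)$, using monotonicity, the chain rule, and conditioning-reduces-entropy.

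Finally, although not listed as a separate axiom, non-negativity of $V$ is either immediate from $V=H(X|Y;\mathbb{P})+H(Y|X;\mathbb{P})\ge 0$, or follows formally from $0=V(X,X;\mathbb{P})\le V(X,Y;\mathbb{P})+V(Y,X;\mathbb{P})=2V(X,Y;\mathbb{P})$ via symmetry and the triangle inequality. The only genuinely delicate aspect is the triangle inequality, and within it the point that every term entering the decomposition — the conditional entropies and the two-variable conditional mutual informations — is non-negative; this is what legitimizes the Venn-diagram manipulation here, in contrast with the higher co-informations $I_k$, $k\ge 3$, which may change sign. Everything else is bookkeeping with the chain rule.
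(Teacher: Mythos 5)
Your proof is correct, and at its core it rests on exactly the same two facts as the paper's: the slack in the triangle inequality is $2H(Y|X,Z)+2I(X;Z|Y)$, and both terms are non-negative. The packaging differs: the paper expands all three distances over the seven regions of the $(X,Y,Z)$ Venn diagram and cancels, whereas you first reduce to the one-sided bound $H(X|Z)\le H(X|Y)+H(Y|Z)$ and prove it by the chain $H(X|Z)\le H(X,Y|Z)=H(Y|Z)+H(X|Y,Z)\le H(Y|Z)+H(X|Y)$, then symmetrize. Your route is self-contained (no appeal to a figure) and makes it transparent that only the non-negativity of two-variable conditional informations is needed — the point you rightly flag as the one that would fail for higher $I_k$. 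You also go further than the paper's proof on the second axiom by characterizing when $V(X,Y;\mathbb{P})=0$ (mutual determination, i.e.\ equal partitions on the support up to the equivalence discussed before the theorem), which the paper only treats informally in the surrounding text; and your derivation of positivity from the three axioms matches the paper's remark. The only item in the paper's proof you do not echo is the sign convention on the constant $k$ (the paper's closing caveat about the logarithm basis), which is implicit in your use of the standard non-negativity of conditional entropy.
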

Positivity $H(X,Y;\mathbb{P})\geq 0$ follows from the 3 axioms. Literally, a pseudometric space generalizes metric space in the sense that points need not be distinguishable like in metric space:  formally, one may have $d(X,Y) = 0$ for distinct points $X\neq Y$.\\
\begin{proof}
	The proof of the first criterion just follows from the commutativity of addition in information. The proof of the second axiom $V(X,X;\mathbb{P})=0$ follows from the fact that $H(X|X)=0$. The proof of the triangle inequality is provided by considering the information decomposition as for example depicted in the Figure \ref{figure Markov} right. For simplicity, using set theoretic notations of Entropy and Mutual Information, and we consider that $H(X|Y\cup Z)=A$, $H(X\cap Y|Z)=B$, $H(X\cap Y\cap Y)=C$, $H(X\cap Z|Y)=D$, $H(Y|X\cup Z)=E$, $H(Y\cap Z|X)=F$, $H(Z|X\cup Y)=G$. Then, for whatever random variable $X,Y,Z$, the previous triangle inequality can be written $A+B+G+F\leq A+B+G+F+2E+2D$, which gives $0\leq2E+2D$ or $0\leq2H(Y/X\cup Z)+2H(X\cap Z/Y)$ which by non negativity of the conditional and pairwise Mutual Information is always true. This holds only in the case where the logarithm basis $c$ is chosen in $]0,1]$.
\end{proof}

\subsection{Information geodesics} 

The cases for which the triangle inequality is an equality is interesting since it accounts for the  basic notion of "straight line" or "shortest path". As illustrated in Figure \ref{figure_Information triangleInequality and Markov} any 3 variables $X,Y,Z$ define 3 different triangle inequalities and sub cases of equality. We note those 3 triangle equality $(X,Y,Z)$, $(Y,X,Z)$, $(X,Z,Y)$. We call those cases for which the triangle equality holds, geodesic $(X,Y,Z)$ or geodesic $(Y,X,Z)$, or geodesic $(X,Z,Y)$, although it will only get some more precise meaning after the introduction of complexes of random variable, allowing to define piece-wise linear manifolds, and piecewise linear geodesics of random-variables. We have the following theorem:\\
\begin{theorem} \textbf{ A Geodesic $(X,Y,Z)$ is a Markov chain only determined by its boundaries $X$ and $Z$ :} \label{Information geodesics and Markovianity}  
	A totally ordered triplet $(X,Y,Z)$ is geodesic if and only if $(X;Z)$ are conditionally independent given $Y$ and $H(Y|(X,Z))=0$.
\end{theorem}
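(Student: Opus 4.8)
The plan is to collapse the geodesic condition into a single scalar identity, from which both of the stated conditions fall out by non-negativity. By definition $(X,Y,Z)$ is geodesic exactly when the triangle inequality for this ordering is saturated, i.e. $V(X,Z;\mathbb{P}) = V(X,Y;\mathbb{P}) + V(Y,Z;\mathbb{P})$. Using $V(A,B;\mathbb{P}) = H(A|B;\mathbb{P}) + H(B|A;\mathbb{P}) = 2H(A,B;\mathbb{P}) - H(A;\mathbb{P}) - H(B;\mathbb{P})$, everything reduces to the identity
\begin{equation}\label{geodesic identity}
V(X,Y;\mathbb{P}) + V(Y,Z;\mathbb{P}) - V(X,Z;\mathbb{P}) = 2\bigl(I(X;Z|Y;\mathbb{P}) + H(Y|(X,Z);\mathbb{P})\bigr).
\end{equation}

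First I would prove \eqref{geodesic identity} exactly in the style of the triangle-inequality proof above, reading off the seven atoms $A,\dots,G$ of the information (Venn) decomposition of Figure \ref{figure Markov} (right): one has $V(X,Z) = A+B+F+G$, $V(X,Y) = A+D+E+F$ and $V(Y,Z) = B+D+E+G$, so the atoms $A,B,F,G$ cancel (the central co-information atom $C = I(X;Y;Z)$ never enters any $V$), leaving $2D+2E$ with $D = I(X;Z|Y)$ and $E = H(Y|(X,Z))$. Alternatively, bypassing the diagram, \eqref{geodesic identity} follows from the chain rules alone: combining $I(X;Z|Y) = H(X,Y) + H(Y,Z) - H(Y) - H(X,Y,Z)$ with $H(Y|(X,Z)) = H(X,Y,Z) - H(X,Z)$ and with $V(A,B) = 2H(A,B) - H(A) - H(B)$ yields \eqref{geodesic identity} after elementary cancellation.

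With \eqref{geodesic identity} in hand the theorem is immediate. The left-hand side vanishes --- which is precisely the geodesic condition --- if and only if $I(X;Z|Y;\mathbb{P}) + H(Y|(X,Z);\mathbb{P}) = 0$; and since $I(X;Z|Y) \ge 0$ and $H(Y|(X,Z)) \ge 0$ by non-negativity of conditional mutual information and of conditional entropy (valid, as in the previous proof, for logarithm base in $]0,1]$), this sum is zero iff both summands vanish. Now $I(X;Z|Y) = 0$ is exactly conditional independence of $X$ and $Z$ given $Y$, which by the special case of Theorem \ref{information characterization Markov} recalled above says that $X \rightarrow Y \rightarrow Z$ is a Markov chain, while $H(Y|(X,Z)) = 0$ says exactly that $Y \le (X,Z)$, i.e. that $Y$ is determined by its boundaries $X$ and $Z$; and the converse direction is just this chain of equivalences read backwards. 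I expect the only real care needed to be bookkeeping: one must check that for the ordering $(X,Y,Z)$ the two uncancelled atoms are indeed $I(X;Z|Y)$ and $H(Y|(X,Z))$ and not the pair attached to another vertex, and one should resist treating the Venn picture as a genuine measure, since $C = I(X;Y;Z)$ may be negative --- which is harmless here precisely because $C$ drops out and the only non-negativity used is that of $I(X;Z|Y)$ and $H(Y|(X,Z))$, quantities that are non-negative whatever the signs of the co-informations.
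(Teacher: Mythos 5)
Your proof is correct and follows essentially the same route as the paper's: reduce the saturated triangle inequality to the vanishing of $I(X;Z|Y)+H(Y|(X,Z))$ and conclude by non-negativity of each term. You are in fact slightly more complete than the paper, which asserts the key identity without derivation, whereas you verify it (with the correct factor of $2$) both from the Venn-diagram atoms and from the chain rules.
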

\begin{corollary}
	if $(X,Y,Z)$ is geodesic then  $(X,Y,Z)$ form a Markov chain and $I(X,Y,Z)$ is non negative.
\end{corollary}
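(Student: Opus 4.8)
The plan is to deduce the corollary directly from Theorem~\ref{Information geodesics and Markovianity}, invoking only the information characterization of Markov chains recalled just below Theorem~\ref{information characterization Markov} and the elementary inclusion--exclusion identity linking co-information to conditional mutual information. No new inequality is needed; everything reduces to substituting the two equalities that the geodesic hypothesis supplies.

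So, assume $(X,Y,Z)$ is geodesic. By Theorem~\ref{Information geodesics and Markovianity} this is equivalent to $I(X;Z|Y;\mathbb{P})=0$ (i.e.\ $X$ and $Z$ conditionally independent given $Y$) together with $H(Y|(X,Z);\mathbb{P})=0$. For the first assertion, the special case of Hu Kuo Ting's theorem stated after Theorem~\ref{information characterization Markov} says that $X\rightarrow Y\rightarrow Z$ is a Markov chain if and only if $Y.I(X;Z)=I(X;Z|Y;\mathbb{P})=0$; since the geodesic hypothesis gives exactly this vanishing, $(X,Y,Z)$ forms a Markov chain. Only the first of the two geodesic conditions is used here; the second, $H(Y|(X,Z))=0$, is what additionally pins the chain down to its endpoints $X$ and $Z$ (the content of the theorem's title) but is not needed for the corollary.

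For the second assertion I would use the identity $I(X;Y;Z;\mathbb{P})=I(X;Z;\mathbb{P})-I(X;Z|Y;\mathbb{P})$, which follows from the alternating sum \eqref{Alternated sums of information} for $I_3$ combined with the definitions of $I_2$ and of conditional mutual information \eqref{conditional mutual information}; equivalently, in the information-diagram bookkeeping used in the proof of the pseudometric theorem above, $I(X;Y;Z)$ is the central cell $C$ while $I(X;Z)=C+D$ with $D=I(X;Z|Y)$. Substituting the geodesic equality $I(X;Z|Y;\mathbb{P})=0$ yields $I(X;Y;Z;\mathbb{P})=I(X;Z;\mathbb{P})$, which is the mutual information of a pair of variables, hence a Kullback--Leibler divergence $D(P_{X\times Z}\,\|\,P_X\otimes P_Z)\ge 0$. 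Therefore $I(X;Y;Z;\mathbb{P})\ge 0$.

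I do not anticipate a genuine obstacle: granted Theorem~\ref{Information geodesics and Markovianity}, both claims are one-line substitutions. The only care needed is bookkeeping about the conditioning variable --- among the three triangle-equality sub-cases it is precisely the conditional term attached to the \emph{middle} vertex $Y$, namely $I(X;Z|Y)$, that the ordering $(X,Y,Z)$ forces to vanish, and the co-information identity must be written with that same $Y$ as conditioning variable so that the term annihilated by the hypothesis is exactly the one occurring in it.
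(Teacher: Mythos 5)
Your proposal is correct and follows essentially the same route as the paper: the paper folds the corollary into the proof of Theorem~\ref{Information geodesics and Markovianity}, obtaining $I(X;Z|Y)=0$ and invoking Hu Kuo Ting's characterization for the Markov chain, with the non-negativity of $I_3$ following from the remark after Theorem~\ref{information characterization Markov} that all $I_k$ involving the endpoints are non-negative for a Markov process. Your only addition is to make that last step explicit via the identity $I(X;Y;Z)=I(X;Z)-I(X;Z|Y)$, which is a correct and slightly more self-contained way of getting $I_3=I(X;Z)\ge 0$.
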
		
\begin{proof}
	A totally ordered triplet $(X,Y,Z)$ is geodesic if and only if $V(X,Z)=V(X,Y)+V(Y,Z)$ which is $H(X;Z)-I(X;Z)=H(X;Y)-I(Y;X)+H(Y;Z)-I(Y;Z)$. The equality $H(X;Z)-I(X;Z)=H(X;Y)-I(Y;X)+H(Y;Z)-I(Y;Z)$ holds if and only if $H(Y|(X,Z))+I(X;Z|Y)=0$. Since both terms in the left part of the equation are nonnegative and independent \cite{Han1975}, a necessary and sufficient condition is that both vanish $H(Y|(X,Z))=0$ and $I(X;Z|Y)=0$. $H(Y|(X,Z))=0$  is equivalent to $Y\subset X\cup Z$, meaning that $Y$ is fully determined by $X\cup Z$,  and $I(X;Z|Y)=0$ is equivalent to the requirement that $(X;Z)$ are conditionally independent given $Y$ and hence to the requirement that $(X,Y,Z)$ form a Markov chain (see \ref{information characterization Markov}).
\end{proof} 

\begin{figure} [!h]  	
	\centering
	\includegraphics[height=1.7cm]{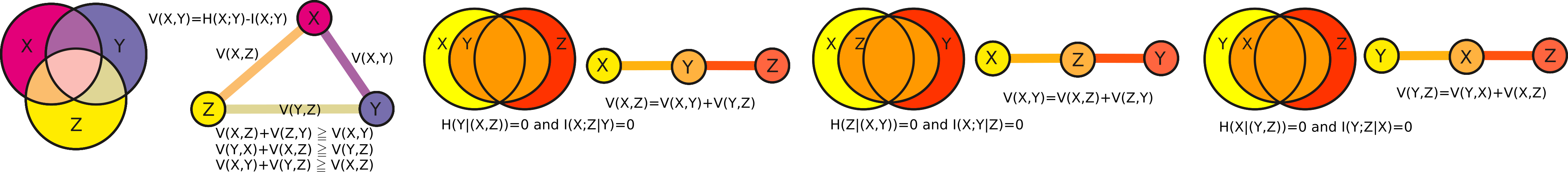}
	\caption{The 3 information triangle inequalities (left) and the associated 3 special case equalities with the corresponding Markov chains (right), together with their associated Venn diagrams.}
	\label{figure_Information triangleInequality and Markov}
\end{figure}

More roughly, it shows that if the path between $(X,Y,Z)$ is of "minimum length, or aligned", then $(X,Y,Z)$ form a Markov chain and $I_3$ is positive. The 3 triangle inequality, and the 3 cases of equality associated with their Markov Chains are depicted in the figure \ref{figure_Information triangleInequality and Markov} by their corresponding undirected graph and Venn diagrams. As the constraint $H(Y|(X,Z))=0$ only imposes the inclusion of $Y$ to the geodesic $(X,Y,Z)$, we see that the constraint of conditional independence $I(X;Z|Y)=0$ imposes the "straightness", hence one may interpret geometrically conditional dependences  $I(X;Z|Y)$ as quantifying the deviation from straight line.

We call a totally ordered k-uplet $(X_1,...,X_k)$ a conditionally independent chain if for all sub total orders of 3 variables $(X_h,X_i,X_k)$ we have $I(X_h;X_j|X_i)=0$. We call a totally ordered k-uplet $(X_1,...,X_k)$  a deterministic chain if for all sub total orders of 3 variables $(X_h,X_i,X_k)$ we have $H(X_i|X_h,X_j)=0$ (which is equivalent to claim that  to $X_i\subset X_h\cup X_j$, meaning that $X_i$ is  deterministic function of $X_h\cup X_j$ ). It directly generalizes to arbitrary $k$ random variables:
\begin{theorem}[general random variable geodesics]
	\textbf{ } \label{Information geodesics general}  
	A totally ordered k-uplet $(X_1,...,X_k)$ is a Geodesic if and only if it is a conditionally independent and deterministic chain (determined by its boundaries $X_1$ and $X_k$).
\end{theorem}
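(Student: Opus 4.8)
The plan is to reduce the whole statement to the triplet case, Theorem~\ref{Information geodesics and Markovianity}, by exploiting the triangle inequality. I read ``geodesic'' for a $k$-uplet as additivity of length along the chain, $V(X_1,X_k)=\sum_{i=1}^{k-1}V(X_i,X_{i+1})$; the forward direction below will show this is the same as requiring every ordered sub-triple to be geodesic, so the reading is unambiguous.

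First I would do the forward implication. Write $\ell_i=V(X_i,X_{i+1})$ and fix $1\le a<b<c\le k$. Repeated use of the triangle inequality gives $V(X_1,X_a)\le\sum_{i<a}\ell_i$, $V(X_a,X_c)\le\sum_{a\le i<c}\ell_i$ and $V(X_c,X_k)\le\sum_{i\ge c}\ell_i$, while one more application gives $V(X_1,X_k)\le V(X_1,X_a)+V(X_a,X_c)+V(X_c,X_k)$. Since the left side equals $\sum_i\ell_i$ by hypothesis and the right side is $\le\sum_i\ell_i$, all these inequalities are equalities; in particular $V(X_a,X_c)=\sum_{a\le i<c}\ell_i$, and splitting instead at $b$ yields $V(X_a,X_c)=V(X_a,X_b)+V(X_b,X_c)$. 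Thus $(X_a,X_b,X_c)$ is geodesic, so by Theorem~\ref{Information geodesics and Markovianity} we get $I(X_a;X_c|X_b)=0$ and $H(X_b|(X_a,X_c))=0$. Ranging over all ordered triples, these are exactly the conditions defining a conditionally independent and deterministic chain; the case $a=1,c=k$ is the clause ``determined by its boundaries''.

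For the converse I would induct on $k$, the base $k=3$ being Theorem~\ref{Information geodesics and Markovianity}. If $(X_1,\dots,X_k)$ is a conditionally independent and deterministic chain then so is the sub-uplet $(X_1,\dots,X_{k-1})$ (the defining conditions only constrain triples), so by the induction hypothesis $V(X_1,X_{k-1})=\sum_{i=1}^{k-2}\ell_i$. The triple $(X_1,X_{k-1},X_k)$ satisfies $I(X_1;X_k|X_{k-1})=0$ and $H(X_{k-1}|(X_1,X_k))=0$ by assumption, hence is geodesic by Theorem~\ref{Information geodesics and Markovianity}, giving $V(X_1,X_k)=V(X_1,X_{k-1})+V(X_{k-1},X_k)=\sum_{i=1}^{k-1}\ell_i$, the required additivity. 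The Markov property of the whole uplet and non-negativity of the higher co-informations $I_k(X_I)$ then follow triple by triple from the corollary to Theorem~\ref{Information geodesics and Markovianity} and from Theorem~\ref{information characterization Markov}.

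I expect the only delicate point to be the bookkeeping in the forward direction: one must arrange the chain of triangle inequalities so that zero total slack forces equality on each individual link simultaneously. Once that telescoping step is in place, every bit of genuinely probabilistic content is supplied by the $k=3$ case, and the rest is the straightforward induction above.
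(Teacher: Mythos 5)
Your proof is correct and follows the same overall strategy as the paper's: reduce the $k$-uplet statement to the triplet case, Theorem~\ref{Information geodesics and Markovianity}. The difference is one of completeness rather than of route. The paper's proof is a single sentence asserting that ``$(X_1,\dots,X_k)$ is a geodesic'' is equivalent to ``all $\binom{k}{3}$ ordered sub-triples are geodesic'' (misprinted there as $\binom{3}{k}$), without ever defining what a $k$-geodesic is or justifying that equivalence. You adopt the natural definition (additivity of $V$ along the chain, $V(X_1,X_k)=\sum_i V(X_i,X_{i+1})$) and actually prove the equivalence: in the forward direction the telescoping chain $V(X_a,X_c)\le V(X_a,X_b)+V(X_b,X_c)\le\sum_{a\le i<c}\ell_i=V(X_a,X_c)$, combined with zero total slack in $V(X_1,X_k)\le V(X_1,X_a)+V(X_a,X_c)+V(X_c,X_k)\le\sum_i\ell_i$, correctly forces equality on every link; in the converse the induction on $k$ through the triple $(X_1,X_{k-1},X_k)$ is sound. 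So your write-up supplies exactly the bookkeeping the paper dismisses as ``trivial from the definition,'' and in doing so removes the ambiguity in what the theorem is actually claiming.
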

\begin{corollary}
	if  $(X_1,...,X_k)$ is a geodesic then  $(X_1,...,X_k)$ form a Markov chain and all $I_k$ are non negative.
\end{corollary}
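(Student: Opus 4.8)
The plan is to bootstrap everything from the three–variable case already settled in Theorem~\ref{Information geodesics and Markovianity}. First I fix the working definition of a geodesic $k$-uplet: $(X_1,\dots,X_k)$ is a geodesic when the total length of the chain equals the distance between its endpoints, $V(X_1,X_k)=\sum_{i=1}^{k-1}V(X_i,X_{i+1})$. By the triangle inequality this is equivalent to the apparently stronger requirement that \emph{every} sub-triple $(X_a,X_b,X_c)$, $a<b<c$, be a geodesic triple in the sense of Section~2.3, and that equivalence is the engine of the proof. So I split the argument into (i) ``geodesic $\Rightarrow$ every sub-triple geodesic $\Rightarrow$ conditionally independent and deterministic chain'' and (ii) the converse, which I would handle by induction on $k$.

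For (i), iterate the triangle inequality along $V(X_1,X_k)\le V(X_1,X_a)+V(X_a,X_c)+V(X_c,X_k)\le V(X_1,X_a)+V(X_a,X_b)+V(X_b,X_c)+V(X_c,X_k)$ and then bound each term by the corresponding block of consecutive distances, obtaining $V(X_1,X_k)\le\sum_{m=1}^{k-1}V(X_m,X_{m+1})$. If the geodesic equality holds, every inequality in this chain must be an equality; tightness of the step $V(X_a,X_c)\le V(X_a,X_b)+V(X_b,X_c)$ says exactly that $(X_a,X_b,X_c)$ is geodesic, so Theorem~\ref{Information geodesics and Markovianity} gives $H(X_b\mid(X_a,X_c))=0$ and $I(X_a;X_c\mid X_b)=0$. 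Since $a<b<c$ were arbitrary, $(X_1,\dots,X_k)$ is simultaneously a deterministic and a conditionally independent chain.

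For (ii), the cases $k=2$ (trivial) and $k=3$ (Theorem~\ref{Information geodesics and Markovianity}) are the base. Assuming the statement for $k-1$, the prefix $(X_1,\dots,X_{k-1})$ is again a conditionally independent and deterministic chain (its triples sit among those of the full uplet), so by induction $V(X_1,X_{k-1})=\sum_{i=1}^{k-2}V(X_i,X_{i+1})$. Applying the two chain hypotheses to the triple $(X_1,X_{k-1},X_k)$ yields $H(X_{k-1}\mid(X_1,X_k))=0$ and $I(X_1;X_k\mid X_{k-1})=0$, so Theorem~\ref{Information geodesics and Markovianity} gives $V(X_1,X_k)=V(X_1,X_{k-1})+V(X_{k-1},X_k)$; adding the two relations closes the induction. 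The corollary then follows: each sub-triple being geodesic is a $3$-term Markov chain by the Corollary to Theorem~\ref{Information geodesics and Markovianity}, and to promote this to $X_1\to\cdots\to X_k$ being a Markov chain I would use Hu Kuo Ting (Theorem~\ref{information characterization Markov}), checking $X_I.I(X_1;X_k)=0$ for every $I\subseteq\{2,\dots,k-1\}$: the conditionally independent chain gives it for singletons, and determinism (each $X_j$, $1<j<k$, being a function of $(X_1,X_k)$) lets a short chain-rule manipulation collapse any conditioning set down to a single middle variable; non-negativity of all $I_k$ is then the remark following Theorem~\ref{information characterization Markov}.

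The step I expect to be most delicate is not any single computation but getting the definitional bridge exactly right — that ``geodesic'' for $k$ points should mean length additivity along the chain, and that this unpacks to ``every triple is geodesic'' — together with the observation that it is the deterministic part of the chain condition, not just conditional independence, that is needed to upgrade the pairwise information vanishings to the genuine (arbitrary-subset) Markov property invoked in the corollary.
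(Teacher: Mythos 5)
Your proof is correct and follows essentially the same route as the paper: reduce the $k$-uplet to its $\binom{k}{3}$ sub-triples, invoke the three-variable theorem on each, and pass to the Markov property via Hu Kuo Ting's characterization. The paper dismisses all of this as ``trivial from the definition and the preceding theorem''; you merely supply the steps it leaves implicit (the equivalence of length-additivity along the chain with every sub-triple being geodesic, and the chain-rule argument that uses the deterministic condition $H(X_j\mid X_{i},X_k)=0$ to collapse arbitrary conditioning sets to a single middle variable), and those steps check out.
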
	

\begin{proof}
	It is trivial from the definition and the preceding theorem. The requirement that $(X_1,...,X_k)$ is a geodesic is equivalent to require that all the  $\binom{3}{k}$ triplets in $k$ are geodesic, and hence to the fact that all the $\binom{3}{k}$ both  conditional independence $I(X_h;X_j|X_i)=0$ with $h<i<j$, and conditional entropies $H(X_i|X_h;X_j)=0$ with $h<i<j$, holds. 
\end{proof}

\begin{figure} [!h]  	
	\centering
	\includegraphics[height=2.5cm]{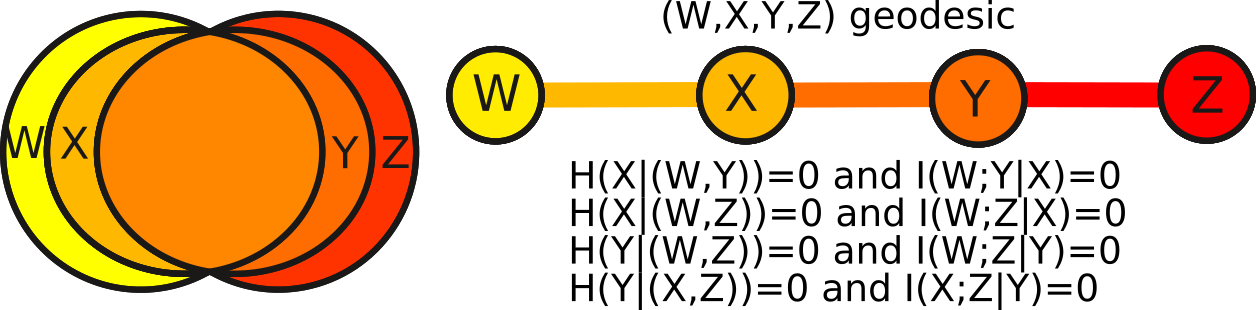}
	\caption{A 4-geodesic with its associated conditional information and entropy, together with its associated Venn diagrams.}
	\label{figure_Information triangleInequality and Markov}
\end{figure}

\subsection{Pythagorean Theorem for Information Metric} 

The second case of interest is the one that fulfills Pythagoras theorem, that characterize orthogonality in Euclidean geometry. 
Consider a triplet $(X,Y,Z)$ of random variable, we call the triplet $(X,Y,Z)$ a Pythagorean triplet if it satisfies Pythagoras relation (cf. Figure \ref{pythagoras}), then we have: 
\begin{theorem} [Pythagorean theorem of information]\label{Pythagorean theorem of information}  
	A triplet $(X,Y,Z)$ is Pythagorean if and only if it satisfies  one of the  3  equations obtained by cyclic permutation of $(X,Y,Z)$ on the following equation:
	\begin{equation}\label{Pythagorean equation}
	\begin{split}
	&\scriptstyle  \sum_{I\subset[N_X \times N_Y], |I|=2} P_I^2 \left( \log \frac{P_I^2}{P_{I_1}P_{I_2}} \right)^2  + 2k \sum_{I\subset[N_X \times N_Y], |I|=2, J\subset[N_X \times N_Y], |I|=2, I<J} P_I P_J  \log \frac{P_I^2}{P_{I_1}P_{I_2}} \log \frac{P_J^2}{P_{J_1}P_{J_2}}  \\
	&\scriptstyle =  \sum_{I\subset[N_X \times N_Z], |I|=2} P_I^2 \left( \log \frac{P_I^2}{P_{I_1}P_{I_2}} \right)^2  + 2k \sum_{I\subset[N_X \times N_Z], |I|=2, J\subset[N_X \times N_Z], |I|=2, I<J} P_I P_J  \log \frac{P_I^2}{P_{I_1}P_{I_2}} \log \frac{P_J^2}{P_{J_1}P_{J_2}}  \\
	&\scriptstyle +  \sum_{I\subset[N_Y \times N_Z], |I|=2} P_I^2 \left( \log \frac{P_I^2}{P_{I_1}P_{I_2}} \right)^2  + 2k \sum_{I\subset[N_Y \times N_Z], |I|=2, J\subset[N_Y \times N_Z], |I|=2, I<J} P_I P_J  \log \frac{P_I^2}{P_{I_1}P_{I_2}} \log \frac{P_J^2}{P_{J_1}P_{J_2}}  
	\end{split}	
	\end{equation}
	where $P_{J_1}$ and $P_{J_1}$ denotes the two marginal variables of the pair $J$.
\end{theorem}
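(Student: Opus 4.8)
The plan is to reduce the stated identity to a direct expansion of the explicit expectation formula for $V_2$ recorded in \eqref{Information distances}. By definition a triplet $(X,Y,Z)$ is Pythagorean exactly when one of its three sides plays the role of the hypotenuse, i.e.\ when one of
\[
\begin{gathered}
V(X,Z)^2 = V(X,Y)^2 + V(Y,Z)^2,\qquad
V(X,Y)^2 = V(X,Z)^2 + V(Y,Z)^2,\\
V(Y,Z)^2 = V(X,Y)^2 + V(X,Z)^2
\end{gathered}
\]
holds. These three relations form a single orbit under the cyclic substitution $(X,Y,Z)\mapsto(Y,Z,X)$ (using that $V_2$ is symmetric), so it suffices to prove that the first of them is equivalent to the displayed equation \eqref{Pythagorean equation}; the other two then follow verbatim by relabelling the variables.

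First I would write, for a generic pair $(U,W)$ whose joint variable has atoms indexed by two-element label sets $I$ — so that $P_I=p(u,w)$ while $P_{I_1},P_{I_2}$ are the two marginal masses — the defining expression from \eqref{Information distances} in the form
\[
V(U,W;\mathbb P)\;=\;k\sum_{I} P_I\,\log\frac{P_I^2}{P_{I_1}P_{I_2}} .
\]
The heart of the computation is then simply that the square of a sum is not the sum of squares: squaring and splitting the resulting double sum into its diagonal part ($J=I$) and its off-diagonal part, and using the symmetry of the summand in $(I,J)$ to restrict the latter to $I<J$ at the cost of a factor $2$, gives
\[
V(U,W;\mathbb P)^2\;=\;k^2\!\left[\;\sum_{I} P_I^2\Big(\log\tfrac{P_I^2}{P_{I_1}P_{I_2}}\Big)^{2}\;+\;2\sum_{I<J} P_I P_J\,\log\tfrac{P_I^2}{P_{I_1}P_{I_2}}\,\log\tfrac{P_J^2}{P_{J_1}P_{J_2}}\;\right].
\]
Applying this with $(U,W)$ equal to $(X,Z)$, then $(X,Y)$, then $(Y,Z)$ — each time with $I,J$ ranging over the atom labels of the relevant joint variable — and substituting into $V(X,Z)^2=V(X,Y)^2+V(Y,Z)^2$, the common power of the gauge constant combines exactly as written (the factor $k$ visible in the cross terms of \eqref{Pythagorean equation} is this residual normalisation), and what survives is precisely \eqref{Pythagorean equation}, with the diagonal blocks carrying coefficient $1$ and the cross blocks carrying coefficient $2$. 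Since every manipulation is reversible, the Pythagorean relation and \eqref{Pythagorean equation} are equivalent; together with the cyclic symmetry this proves the theorem.

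The step that needs the most care is purely notational rather than mathematical: the symbol $P_I$ refers to an atom of a \emph{different} joint variable in each of the three blocks $[N_X\times N_Z]$, $[N_X\times N_Y]$, $[N_Y\times N_Z]$, so the three families of label sets and their marginal projections $I\mapsto(I_1,I_2)$ must be kept strictly separate, and one must check that the ordering "$I<J$" used to halve each double sum is the one fixed in the statement. Beyond that there is no obstacle — the theorem is essentially the observation that the abstract Pythagoras relation for the length $V_2$, unfolded through $V_2=k\sum_I P_I\log\frac{P_I^2}{P_{I_1}P_{I_2}}$ (equivalently through the Jensen--Shannon form $V_2=D(P_{U\times W}\|P_U)+D(P_{U\times W}\|P_W)$ of \eqref{Information distances}), is exactly the polynomial-in-logarithms identity \eqref{Pythagorean equation}. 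I would close by remarking that this also explains why the special integer solutions alluded to in the abstract can occur: \eqref{Pythagorean equation} is a genuine quadratic constraint on the joint law, not a mere recoding of Markovianity as in the geodesic case, so it can be met by laws for which $V(X,Y),V(Y,Z),V(X,Z)$ happen to form an ordinary Pythagorean triple of bits.
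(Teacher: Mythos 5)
Your proposal is correct and follows essentially the same route as the paper: both substitute the explicit sum expression $V(U,W)=k\sum_I P_I\log\frac{P_I^2}{P_{I_1}P_{I_2}}$ into the squared Pythagorean relation and expand via the multinomial identity $(a+b+\cdots)^2=\sum a_i^2+2\sum_{i<j}a_ia_j$, separating diagonal from off-diagonal terms. Your version is somewhat more explicit about the reversibility of the steps and the bookkeeping of which joint alphabet each index set $I$ ranges over, but the underlying argument is the same.
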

\begin{figure} [!h]  	
	\centering
	\includegraphics[height=2.2cm]{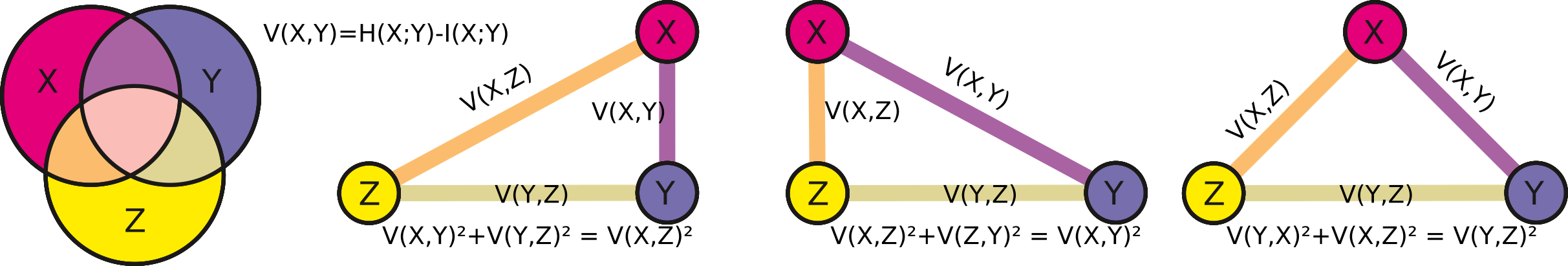}
	\caption{The 3 cases of Pythagorean equality of squared distance corresponding to Euclidean orthogonality and their associated  equations.}
	\label{pythagoras}
\end{figure}

\begin{proof}
	There are 3 possible Pythagorean equation obtained by cyclic permutation of $(X,Y,Z)$. One Pythagorean equation for information distance is $V(X;Y;P_{X\times Y})^2 =V(X;Z;P_{X\times Z})^2 + V(Y;Z;P_{Y\times Z})^2$. Substituting each distance by its expression, for example the = $V(X;Y;P_{X\times Y}) =k\sum_{x_1,x_2\in\mathscr{X}}^{N_1*N_2}p(x_1.x_2)\ln \frac{(p(x_1.x_2))^2}{p(x_1)p(x_2)}$, and then applying remarkable identity of squared polynomial $(a+b+c)^2=a^2+b^2+c^2+2(ab+ac+bc)$ gives the expected result.
\end{proof}
The expression is slightly cumbersome, considering special cases like identically distributed independent variables simplifies a lot the expression and we obtain, as a corollary, the 3 equations obtained by cyclic permutation of $(X,Y,Z)$, are given by the corollary :
\begin{corollary}
	A triplet $(X,Y,Z)$ is a Pythagorean triplet of independent and identically distributed variables if and only if it satisfies one of the 3 equations obtained by cyclic permutation of $(X,Y,Z)$ on the following equations: $N_Z=c^k$, $N_Y=N_X^{(k)^2}$, where $ k\in \mathbb{N}^+$ and $c\in \mathbb{N},~ c>1$ is the basis of the logarithm.
\end{corollary}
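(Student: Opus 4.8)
The plan is to deduce the corollary from Theorem~\ref{Pythagorean theorem of information} by specializing to the stated hypotheses, reducing the Pythagorean relation to an algebraic identity among the alphabet cardinalities, and then solving that identity over the integers. First I would use the compact form of the Pythagorean condition that underlies equation~(\ref{Pythagorean equation}): for the labelling in which $Z$ sits at the right angle, $(X,Y,Z)$ is Pythagorean iff $V(X,Y;P_{X\times Y})^2=V(X,Z;P_{X\times Z})^2+V(Y,Z;P_{Y\times Z})^2$, the other two of the three equations being its two cyclic permutations. Then I impose the hypotheses. ``Independent'' is read as pairwise stochastic independence, so $I_2(X;Y)=I_2(X;Z)=I_2(Y;Z)=0$ and hence $V(U,W)=H(U)+H(W)$ for each pair. ``Identically distributed'' must be read in the weak sense that each variable is \emph{uniform} on its alphabet, with possibly different cardinalities (literally equal laws would make all three $V$'s coincide and the Pythagorean identity degenerate), so that under the base-$c$ gauge $k=-1/\ln c$ one has $H(X)=\log_c N_X$, $H(Y)=\log_c N_Y$, $H(Z)=\log_c N_Z$, whence $V(X,Y)=\log_c N_X+\log_c N_Y$, and likewise for the other pairs.

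Writing $a=\log_c N_X$, $b=\log_c N_Y$, $d=\log_c N_Z$, the Pythagorean equation becomes $(a+b)^2=(a+d)^2+(b+d)^2$; expanding the squares (the same elementary step as in the proof of Theorem~\ref{Pythagorean theorem of information}) and cancelling the common terms collapses it to $ab=d(a+b+d)$, i.e. $(a-d)(b-d)=2d^2$, with the two cyclic permutations $(b-a)(d-a)=2a^2$ and $(a-b)(d-b)=2b^2$. Finally I would impose that $N_X,N_Y,N_Z$ and $c$ are integers with $c>1$: granting that this forces each cardinality to be an integral power of $c$, say $N_X=c^m$, $N_Y=c^n$, $N_Z=c^p$ with $m,n,p\in\mathbb{N}^+$, the relation becomes the Diophantine equation $(m-p)(n-p)=2p^2$. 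Its basic solution family is the scaling $(m,n,p)=(2k,3k,k)$ of the $(3,4,5)$ right triangle, for which $(V(X,Z),V(Y,Z),V(X,Y))=(3k,4k,5k)$ --- the source of the ``natural integer triplets'' announced in the abstract --- and this yields $N_Z=c^k$ together with $N_X=c^{2k}=N_Z^2$ and $N_Y=c^{3k}=N_Z^3$ (equivalently $N_Y^2=N_X^3$); the three displayed equations correspond to the three choices of which of $X,Y,Z$ plays the role of the right-angle vertex.

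The step I expect to be the real obstacle is the integrality reduction, i.e. passing from ``$(a-d)(b-d)=2d^2$ holds for the reals $\log_c N_X,\log_c N_Y,\log_c N_Z$'' to ``the $N$'s are powers of $c$''. A nontrivial polynomial relation among logarithms of integers to an integer base cannot hold for generic arguments, but ruling out the exotic cases rigorously needs an irrationality, or linear/algebraic independence, input on logarithms of multiplicatively independent integers (of a Baker or six-exponentials flavour) rather than mere algebra --- already $\log_6 2+\log_6 3=1$ shows such relations can appear. The clean route, and presumably the intended one, is to restrict from the outset to alphabets whose cardinalities are powers of the chosen base $c$; after that, only the expansion of the squares and the factoring of the quadratic $(m-p)(n-p)=2p^2$ in positive integers remain, both of which are routine.
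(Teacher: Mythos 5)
Your derivation coincides with the paper's own up to the reduced equation: the paper likewise invokes $V_2=H_2$ for independence, reads ``identically distributed'' as $H(X,Y)=k\log(N_XN_Y)$ (i.e.\ the uniform case, exactly the reading you adopt), and expands $(a+b)^2=(a+d)^2+(b+d)^2$ with $a=\log_c N_X$, $b=\log_c N_Y$, $d=\log_c N_Z$ to obtain $ab-ad-bd=d^2$, which is its equation \eqref{Pythagorean equation iid}; your $(a-d)(b-d)=2d^2$ is an equivalent rewriting. The divergence is in the endgame, and it is substantive. The paper manipulates \eqref{Pythagorean equation iid} into $N_Y=N_X^{(\log_c N_Z)^2}$ and concludes $N_Z=c^k$, $N_Y=N_X^{k^2}$; but substituting $d=k$ and $b=k^2a$ back into $ab-ad-bd=d^2$ gives $k^2a^2-(k+k^3)a-k^2=0$, whose positive root is irrational (for $k=1$ it is $a=1+\sqrt{2}$), so the corollary's stated conclusion does not in fact satisfy the Pythagorean relation: the paper's intermediate identity $\log_c\bigl(N_Y^{\log_c N_X/\log_c N_Z}\big/N_Z^{\log_c N_X\log_c N_Z}\bigr)=0$ is not equivalent to \eqref{Pythagorean equation iid}. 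Your family $(a,b,d)=(2k,3k,k)$ does satisfy it ($6k^2-2k^2-3k^2=k^2$) and yields the $(3k,4k,5k)$ integer triples promised in the abstract; so your computation is the sound one, and what it establishes is incompatible with the statement you were asked to prove rather than a proof of it.

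Two residual points on your side. First, $(m,n,p)=(2k,3k,k)$ is only one family of solutions of $(m-p)(n-p)=2p^2$; for instance $(m,n,p)=(3,10,2)$ gives the $5$--$12$--$13$ triangle, and in general $p$ plays the role of the inradius in the classical parametrization of Pythagorean triples, so a genuine ``if and only if'' must describe the full solution set, not just the basic family. Second, you are right that the passage from the real relation among $\log_c N_X,\log_c N_Y,\log_c N_Z$ to ``each $N$ is a power of $c$'' requires a transcendence or multiplicative-independence input; the paper disposes of this with the single clause ``because of the transcendence of the logarithm function'', which is not a proof, and building the power-of-$c$ restriction into the hypotheses, as you propose, is the defensible fix.
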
	
\begin{proof}
	The variables are independent if and only if  $V_2=H_2$ \cite{Baudot2019}, the variables are independent identically distributed if and only if $H(X,Y)=k\log (N_X.N_Y)$. Hence one of the 3  Pythogorean equation $V(X;Y;P_{X\times Y})^2 =V(X;Z;P_{X\times Z})^2 + V(Y;Z;P_{Y\times Z})^2$ becomes by application of remarkable identity:
	\begin{equation}\label{Pythagorean equation iid}
	\log_c N_X \log N_Y -\log_c N_X \log_c N_Z -\log_c N_Y \log_c N_Z = \left(\log_c N_Z \right)^2
	\end{equation}
	which is equivalent to: $\log_c N_Y^{\log_c N_X} -\log_c N_Y^{\log N_Z} -\log_c N_Z^{\log_c N_X} - \log_c N_Z^{\log_c N_Z} =0 $. A simple algebraic calculus gives $ \log_c \frac{N_Y^{\frac{ \log_c N_X} {\log_c N_Z }}}{N_Z^{\log_c N_X \log_c N_Z}}=0$ and hence $ N_Y= N_X^{(\log_c N_Z)^2}$. By definition the $N_x,N_y,N_z \in  \mathbb{N}^+$ are natural integers in the basic discrete setting, hence the equation holds if and only if $(\log_c N_Z)^2 \in \mathbb{N}^+$, which can only be achieved if  $N_z= c^k$ with $k\in \mathbb{N}^+$, and $c\in \mathbb{N},~ c>1$, because of the transcendence of the logarithm function. Then if $N_z= c^k$  we have $N_Y=N_X^{(k)^2}$, which is the expected result.
\end{proof}
This result suggests  extensions and generalizations to continuous variable and spheric or hyperbolic geometry that are left for further work. It more over provide an unexpected notion of orthogonality in natural integers \cite{Niven1964,Kuipers1971}, the special case of identically distributed but not necessarily independent should be of interest.  

\subsection{Informational metric measure space and optimal transport} 
Defining the metric $V(X ;Y,\mathbb{P})$ turns the information structure $\mathcal{S}$ into a metric space (more exactly a pseudometric space), and since entropy is a measure \cite{Yeung2007}, it can be considered as a (pseudo-)metric measured space. Requiring a function to be a metric and additive is indeed a standard construction of measure, see \cite{Rudin1976} p. 305 (notably for the proof that symmetric difference properties implies triangle inequality). This is always a complete metric space. If it is separable, the measure algebra information structure is also called separable, and indeed any (countably) finitely generated information structure is separable. This metric is known to be invariant under volume-preserving affinities of $\mathbb{R}^n$ \cite{Shephard1965}. This way, it becomes possible to obtain a metric measure space where metric and measure are basic and intrinsically pertain to information theory, such that it becomes possible to investigate optimal transport theory based on Kantorovich-Wasserstein distance on the same footing \cite{Ollivier2009,Figalli2011}. On such a line pointing out that information theory is more general than optimal transport theory (at first, it does not require metric assumption), Belavkin \cite{Belavkin2016} showed that relaxing the constraint of output measure in optimal transport, the optimal transport problem becomes mathematically equivalent to the optimal channel problem in information theory, which uses a constraint on the mutual information and hence that the  optimal channel defines a lower bound on the Wasserstein metric.

\section{Application to data} 
\begin{sloppypar}
The python package \href{https://infotopo.readthedocs.io/en/latest/basic_methods.html#information-distance}{infotopo} computes information distances and volumes within a given datasets \cite{Baudot2020}. It also provide the resulting distances as the adjacency matrix and its associated graph representation. This matrix is a standard input for many machine learning package for clustering like HDBSCAN or dimension reduction like UMAP \cite{McInnes2018}. The package includes examples of applications to several challenge data set provided by \href{https://scikit-learn.org/stable/datasets/index.html}{scikit-learn} \cite{Pedregosa2011}. Methods to estimate the curse of dimensionality (undersampling) and statistical test of independence, as well as information landscapes are described in \cite{Baudot2019}. We provide here the example of the Diabetes dataset, illustrated in figure \ref{Diabete_infometric}. This dataset contains 10 variables-dimensions for a sample size (number of points) of 442 and a target (label) variable which quantifies diabetes progress. The ten variables are [0:age, 1:sex, 2:body mass index, 3:average blood pressure, 4:T-Cells, 5:low-density lipoproteins, 6:high-density lipoproteins, 7:thyroid stimulating hormone, 8:lamotrigine, 9:blood sugar level] in this order. The package allows to compute most of the usual information functions, as presented in \cite{Baudot2019,Tapia2018}.  Higher statistical structure quantified by multivariate Mutual-Informations and Total correlations obviously provide much more discriminative information for supervised and unsupervised learning \cite{Baudot2019,Baudot2019a} (obviously better than multivariate $V_k$ that are not boundary or cocycle). It is possible to identify geodesics of variables, even if they a priori seem unlikely given the hard constraint of deterministic chains, and we let it for further investigations.
\end{sloppypar}
\begin{figure} [!h]  	
	\centering
		\includegraphics[height=11cm]{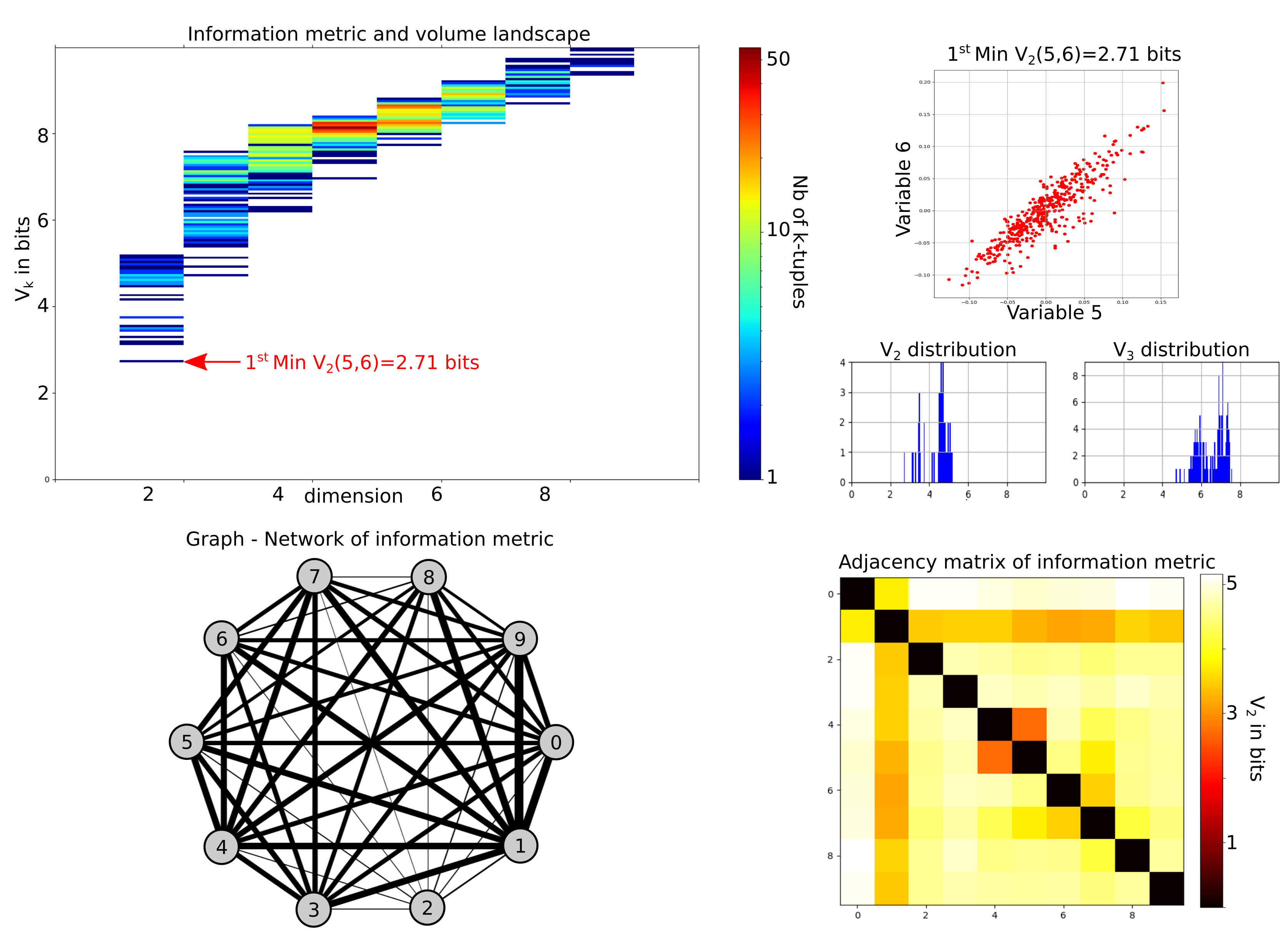}
		\caption{Information metric on Diabete dataset (scikit-learn \cite{Pedregosa2011}): The metric ($V_2$) and Volume landscape (see \cite{Baudot2019,Baudot2019a}). In red, the pair of varaible (5:low-density lipoproteins, 6:high-density lipoproteins) presenting the lowest information metric. In blue: the distribution of $V_2$ and $V_k$ presented in the landscape. Bottom right: the adjacency matrix of the information metric. Bottom left: the associated simple undirected graph of information metric, the thickness of the edges is proportional to the distance.}
		\label{Diabete_infometric}
\end{figure}

\bibliographystyle{splncs04}
\bibliography{bibtopo}

\begin{thebibliography}{10}
\providecommand{\url}[1]{\texttt{#1}}
\providecommand{\urlprefix}{URL }
\providecommand{\doi}[1]{https://doi.org/#1}

\bibitem{Abramsky2011}
Abramsky, S., Brandenburger, A.: The sheaf-theoretic structure of non-locality
  and contextuality. New J. Phys.  \textbf{13},  1--40 (2011)

\bibitem{Baez2014}
Baez, J.C., Fritz, T.: A bayesian characterization of relative entropy. Theory
  and Applications of Categories,  \textbf{Vol. 29, No. 16},  p. 422--456
  (2014)

\bibitem{Baez2011}
Baez, J., Fritz, T., Leinster, T.: A characterization of entropy in terms of
  information loss. Entropy  \textbf{13},  1945--1957 (2011)

\bibitem{Baudot2019a}
Baudot, P.: The poincare-shannon machine: Statistical physics and machine
  learning aspects of information cohomology. Entropy  \textbf{21(9)}(881)
  (2019)

\bibitem{Baudot2021}
Baudot, P.: Cohomological deep learning: Information networks and homotopy.
  Submitted to GSI2021  (2021)

\bibitem{Baudot2015a}
Baudot, P., Bennequin, D.: The homological nature of entropy. Entropy
  \textbf{17(5)},  3253--3318 (2015)

\bibitem{Baudot2020}
Baudot, P., Bennequin, D., Bernardi, M., Combrisson, E., Goaillard, J., Tapia,
  M.: Infotopo: Topological information data analysis. deep statistical
  unsupervised and supervised learning. (2017-2021),
  \url{https://infotopo.readthedocs.io/en/latest/}

\bibitem{Baudot2019}
Baudot, P., Tapia, M., Bennequin, D., Goaillard, J.: Topological information
  data analysis. Entropy  \textbf{21(9)}(869) (2019)

\bibitem{Belavkin2016}
Belavkin, R.: Relation between the kantorovich-wasserstein metric and the
  kullback-leibler divergence. Information Geometry and its Applications IV.
  IGAIA IV 2016. Springer pp. 363--373 (2016)

\bibitem{Bennequin2020}
Bennequin, D., Peltre, O., Sergeant-Perthuis, G., Vigneaux, J.: Extra-fine
  sheaves and interaction decompositions. arXiv:2009.12646  (2020)

\bibitem{Bennett1998}
Bennett, C., Gacs, P., Ming~Li, P., Vitanyi, M., Zurek, W.: Information
  distance. IEEE Transactions on Information Theory  \textbf{44}(4),
  1407--1423 (1998)

\bibitem{Callen1960}
Callen, H.: Thermodynamics. Wiley: New York, NY, USA (1960)

\bibitem{Cartan1946}
Cartan, E.: Lecons sur la geometrie des espaces de Riemann, 2nd ed. Editions
  Jacques Gabay (1946)

\bibitem{Figalli2011}
Figalli, A.;~Villani, C.: Optimal transport and curvature. In Nonlinear PDEs
  and Applications. Lecture Notes in Mathematics Springer.
  http://www.ma.utexas.edu/users/figalli/papers/Optimal
  pp. pp171--217 (2011)

\bibitem{Gerstenhaber1987}
Gerstenhaber, M., Schack, S.: A hodge-type decomposition for commutative
  algebra cohomology. Journal of Pure and Applied Algebra  \textbf{48}(1-2),
  229--247 (1987)

\bibitem{Han1975}
Han, T.S.: Linear dependence structure of the entropy space. Information and
  Control.  \textbf{vol. 29},  p. 337--368 (1975)

\bibitem{Han1981}
Han, T.S.: A uniqueness of shannon’s information distance and related
  nonnegativity problems. Journal of combinatorics  \textbf{6}(4),  330--331
  (1981)

\bibitem{Hu1962}
Hu, K.T.: On the amount of information. Theory Probab. Appl.  \textbf{7(4)},
  439--447 (1962)

\bibitem{Jaccard1901}
Jaccard, P.: Etude comparative de la distribution florale dans une portion des
  alpes et des jura. Bulletin de la Societe Vaudoise des Sciences Naturelles
  \textbf{37},  547--579 (1901)

\bibitem{Kraskov2009}
Kraskov, A. ;~Grassberger, P.: Mic: Mutual information based hierarchical
  clustering. Information Theory and Statistical Learning. Springer ed.
  http://arxiv.org/abs/q-bio/0311039 pp. 101--123 (2009)

\bibitem{Kuipers1971}
Kuipers, L., Neiderreiter, H.: Uniform distributions of sequences. John Wiley
  \& Sons. London Sydney Toronto (1971)

\bibitem{Matsuda2001}
Matsuda, H.: Information theoretic characterization of frustrated systems.
  Physica A: Statistical Mechanics and its Applications.  \textbf{294 (1-2)},
  180--190 (2001)

\bibitem{McGill1954}
McGill, W.: Multivariate information transmission. Psychometrika  \textbf{19},
  p. 97--116 (1954)

\bibitem{McInnes2018}
McInnes, L., Healy, J., Melville, J.: Umap: Uniform manifold approximation and
  projection for dimension reduction. arXiv:1802.03426  (2018)

\bibitem{Niven1964}
Niven, I.: Uniform distribution of sequences of integers. Compositio
  Mathematica  \textbf{16},  158--160 (1964)

\bibitem{Ollivier2009}
Ollivier, Y.: Ricci curvature of markov chains on metric spaces. J. Funct.
  Anal.  \textbf{256},  810--864 (2009)

\bibitem{Pedregosa2011}
Pedregosa, F., Varoquaux, G., Gramfort, A., Michel, V., Thirion, B., Grisel,
  O., Blondel, M., Prettenhofer, P., Weiss, R., Dubourg, V., Vanderplas, J.,
  Passos, A., Cournapeau, D., Brucher, M., Perrot, M., Duchesnay, E.:
  Scikit-learn: Machine learning in python. Journal of Machine Learning
  Research  \textbf{12},  2825--2830 (2011)

\bibitem{Rajski1961}
Rajski, C.: A metric space of discrete probability distributions. Information
  and Control  \textbf{4}(4),  371--377 (1961)

\bibitem{Rudin1976}
Rudin, W.: Principles of Mathematical Analysis (3rd ed.). McGraw-Hill Education
  (1976)

\bibitem{Shannon1948}
Shannon, C.E.: A mathematical theory of communication. The Bell System
  Technical Journal  \textbf{27},  379--423 (1948)

\bibitem{Shannon1953}
Shannon, C.: A lattice theory of information. Trans. IRE Prof. Group Inform.
  Theory  \textbf{1},  105--107 (1953)

\bibitem{Shephard1965}
Shephard, G.C. \&~Webster, R.: Metrics for sets of convex bodies. Mathematika
  \textbf{12}(1),  73--88 (1965)

\bibitem{Tapia2018}
Tapia, M., Baudot, P., Formizano-Treziny, C., Dufour, M., Temporal, S.,
  Lasserre, M., Marqueze-Pouey, B., Gabert, J., Kobayashi, K., J.M., G.:
  Neurotransmitter identity and electrophysiological phenotype are genetically
  coupled in midbrain dopaminergic neurons. Scientific reports  (2018)

\bibitem{Vigneaux2019}
Vigneaux, J.: Topology of statistical systems. A cohomological approach to
  information theory. Ph.D. thesis, Paris 7 Diderot University (2019)

\bibitem{Yeung2007}
Yeung, R.: Information Theory and Network Coding. Springer (2007)

\bibitem{Zurek1989}
Zurek, W.: Thermodynamic cost of computation, algorithmic complexity and the
  information metric. Nature  \textbf{341},  119--125 (1989)

\end{thebibliography}

\end{document}